\theoremstyle{plain}
\newtheorem{theorem}{Theorem}[section]
\newtheorem{lemma}{Lemma}[section]
\theoremstyle{remark}
\newtheorem{remark}{Remark}[section]
\newtheorem{remarks}{Remarks}[section]
\numberwithin{equation}{section}
\newcommand{\e}{^\varepsilon}
\newcommand{\eps}{{\varepsilon}}
\newcommand{\ds}{\displaystyle}
\renewcommand{\a}{\alpha}
\renewcommand{\b}{\beta}
\newcommand{\D}{\mathrm{d}}
\newcommand{\cupl}{\bigcup\limits}
\newcommand{\suml}{\sum\limits}
\newcommand{\intl}{\int\limits}
\newcommand{\liml}{\lim\limits}
\newcommand{\minl}{\min\limits}
\newcommand{\A}{\mathcal{A}}
\renewcommand{\phi}{\varphi}
\begin{document}

\title[On the spectrum of narrow waveguides with periodic $\delta'$ traps]{On the spectrum of narrow Neumann waveguide with periodically distributed $\delta'$ traps}

\author[Pavel Exner]{Pavel Exner${^{1,2}}$}
\thanks{$^1$ Nuclear Physics Institute, Academy of Sciences of the Czech Republic, 
Hlavn\'{i} 130, 25068 \v{R}e\v{z} near Prague, Czech Republic; e-mail: exner@ujf.cas.cz}
\thanks{$^2$ Doppler Institute, Czech Technical University, B\v{r}ehov\'{a} 7, 11519 Prague, Czech Republic}

\author[Andrii Khrabustovskyi]{Andrii Khrabustovskyi${^{3,4}}$}
\thanks{$^3$ DFG Research Training Group 1294, Department of Mathematics, Karlsruhe 
Institute of Technology, Engesserstra{\ss}e 6,
76131 Karlsruhe, Germany; e-mail: andrii.khrabustovskyi@kit.edu}

\thanks{$^4$ Corresponding author}

\begin{abstract}
We analyze a family of singular Schr\"odinger operators describing a Neumann waveguide with a periodic array of singular traps of a $\delta'$ type. We show that in the limit when perpendicular size of the guide tends to zero and the $\delta'$ interactions are appropriately scaled, the first spectral gap is determined exclusively by geometric properties of the traps.
\end{abstract}

\subjclass[2010]{35P05, 35P20, 35J10, 81Q37}
\keywords{periodic waveguides, Schr\"{o}dinger operators, $\delta'$ interaction, spectrum, gaps, asymptotic analysis\vspace{1mm}}

\maketitle


\section{\label{sec0} Introduction}

The problem addressed in this paper concerns the limiting behaviour of a particular class of Schr\"odinger operators with singular coefficients. They can be characterized as Neumann Laplacians on a cylindrical region in $\mathbb{R}^{n}$ perturbed by a array a singular ``traps'' consisting of a $\delta'$ interaction \cite{AGHH05} supported by the boundary of fixed subsets of the cylinder. We will be interested in the situation when the cylinder shrinks in the perpendicular direction and the parameter of the $\delta'$ interaction simultaneously changes making the latter weaker. We are going to show that the limiting behaviour of the first spectral gap is determined exclusively by geometric properties of the traps, namely their volume and surface area.

The motivation to study such an asymptotic behaviour is twofold. On one hand it is an interesting spectral problem falling within one of the traditional mathematical-physics categories, asking about relations between the geometric and spectral properties. On the other hand, it is of some practical interest, especially in the light of the recently growing interest to metamaterials and engineering of spectral properties. True, the $\delta'$ interactions with their peculiar scattering properties are rather a mathematical construct, however, they can be be approximated by regular or singular potentials following a seminal idea put forward in
\cite{CS98} and made mathematically rigorous in \cite{AN00, ENZ01}. Hence at least in the principal sense the result of this paper provides a way to achieve a prescribed spectral filtering.


\section{\label{sec1} Setting of the problem and main result}

\noindent In what follows $\eps>0$ will be a small parameter. For a fixed $n\in\mathbb{N}\setminus\{1\}$ we denote by $x'=(x_1,\dots,x_{n-1})$ and $x=(x',x_n)$  the Cartesian coordinates in $\mathbb{R}^{n-1}$ and $\mathbb{R}^{n}$, correspondingly. Let $\omega$ be an open domain in $\mathbb{R}^{n-1}$ with a Lipschitz boundary. By $\Omega\e$ we denote a straight cylinder in $\mathbb{R}^{n}$ with a cross-section $\eps\omega$, i.e.
$$\Omega\e=\left\{x=(x',x_n)\in\mathbb{R}^n:\ \eps^{-1}x'\in\omega\right\}.$$
Furthermore, we introduce the set
$$Y=\left\{x=(x',x_n)\in\mathbb{R}^n:\ |x_n|<1/2,\ x'\in\omega\right\},$$
which will play role of the period cell of the problem before scaling. Let $B$ be an arbitrary domain with a Lipschitz boundary $S=\partial B$ and such that $\overline{B}\subset Y$. For any $i\in\mathbb{Z}$ we denote
$$S_i\e=\eps(S+ie_n),\quad B_i\e=\eps(B+ie_n),\quad Y_i\e=\eps(Y+ie_n),$$
where $e_n=(0,0,\dots,0,1)$ is the unit vector along the cylinder axis.

Next we describe the family of operators $\mathcal{A}\e$ which will the main object of our interest in this paper. We denote $$\Gamma\e= \cupl_{i\in\mathbb{Z}}S_i\e$$ and introduce the sesquilinear form in the Hilbert space $L_2(\Omega\e)$  by
\begin{gather}\label{eta}
\eta\e[u,v]=\intl_{\Omega\e\setminus \Gamma\e}\nabla u\cdot\nabla\bar{v} \D x+a\e\suml_{i\in\mathbb{Z}}\intl_{S_i\e}(u_+-u_-)\overline{(v_+-v_-)}\,\D s,\quad a\e>0,
\end{gather}
here and in the following we denote by $u_+$ (respectively, $u_-$) the traces of the function $u$ taken from the exterior (respectively, interior) side of ${S_i\e}$. The form domain is supposed to be
\begin{multline*}
\mathrm{dom}(\eta\e)=H^1(\Omega\e\setminus \Gamma\e)\\ :=\left\{u\in L_2(\Omega\e):\ u\in H^1(\Omega\e\setminus\cupl_{i\in \mathbb{Z}} \overline{B_i\e}),\ u\in H^1(B_i\e)\text{ for all }i\in\mathbb{Z},\ \suml_{i\in\mathbb{Z}}\|\nabla u\|^2_{L_2(B_i\e)}<\infty\right\}.
\end{multline*}
The definition of $\eta\e[u,v]$ makes sense: the second sum in \eqref{eta} is finite because of the standard trace inequalities, namely
\begin{multline*}
\suml_{i\in\mathbb{Z}}\intl_{S_i\e}|u_+-u_-|^2\,\D s\leq
2\suml_{i\in\mathbb{Z}}\intl_{S_i\e}\left(|u_+|^2+|u_-|^2\right)\,\D s\\ \leq
C(\eps)\suml_{i\in\mathbb{Z}}\left(\|u\|^2_{H^1(Y_i\e\setminus \overline{B_i\e})}+\|u\|^2_{H^1(B_i\e)}\right)=
C(\eps)\|u\|_{H_1(\Omega\e\setminus \Gamma\e)}^2,
\end{multline*}
where the constant $C(\eps)$ is independent of $u$. Furthermore, it is straightforward to check that the form $\eta\e[u,v]$ is densely defined, closed and positive. Then (see, e.g., \cite[Chapter 6, Theorem 2.1]{K66}) there exists the unique self-adjoint and positive operator $\mathcal{A}\e$ associated with the form $\eta\e$, i.e.
\begin{gather}\label{eta-a}
(\mathcal{A}\e u,v)_{L_2(\Omega\e)}= \eta\e[u,v],\quad\forall u\in
\mathrm{dom}(\mathcal{A}\e),\ \forall  v\in \mathrm{dom}(\eta\e).
\end{gather}
If $u\in\mathrm{dom}(\mathcal{A}\e)$ and $u\in C^2(\Omega\e\setminus \Gamma\e)$ then via the integration by parts one can show easily that $(\mathcal{A}\e u) (x)=-\Delta u(x)$ for $x\in\Omega\e\setminus \Gamma\e$ and on the boundary $S_i\e$ one has
\begin{gather}\label{Scond}
\left({\partial u\over \partial n}\right)_+=\left({\partial u\over \partial n}\right)_-=:{\partial u\over \partial n},\quad {\partial u\over \partial n}=a\e(u_+-u_-),
\end{gather}
where $n$ is the outward-pointing unit normal to $S_i\e$. This makes it clear that the operators $\mathcal{A}\e$ have the meaning of Hamiltonians describing a waveguide with the Neumann outer boundary and an array of periodically spaced obstacles or traps given by a $\delta'$ interaction supported by $S_i\e$. Note that analogous Schr\"odinger operators in $\mathbb{R}^n$ with a $\delta'$ interaction supported by surfaces have been discussed recently in \cite{BLL13}.

By $\sigma(\mathcal{A}\e)$ we denote the spectrum of $\mathcal{A}\e$. Our goal in this paper is to describe its behavior as $\eps\to 0$ under the assumption that the coupling constant $a\e$ satisfies
\begin{gather}\label{a}
\liml_{\eps\to 0}{a\e\over\eps}=a>0.
\end{gather}

\begin{remark}\label{rem0}
A comment is due at this point to explain why we spoke in the introduction about a \emph{weak} $\delta'$ interaction. Comparing \eqref{Scond} with the standard definition of such an interaction \cite[Sec.~I.4]{AGHH05} we should regard rather the inverse $({a\e})^{-1}$ as the coupling parameter. On the other hand, due to peculiar properties of the interaction \cite[Theorem~I.4.3]{AGHH05} the $\delta'$ coupling is \emph{weak} if this quantity is \emph{large} satisfying, for instance, the asymptotic relation \eqref{a}.
\end{remark}

To state the result we shall use the notation $|\cdot|$ both for the volume of domain in $\mathbb{R}^n$ and for the area of $(n-1)$-dimensional surface in $\mathbb{R}^n$. Furthermore, we denote
$$\a={a|S|\over|B|},\quad \b={a|S|\over |B|}{|Y|\over |Y|-|B|}\,;$$
it is clear that $\a<\b$. Now we are in position to formulate the main result of this work.

\begin{theorem}\label{th1}
Let $L>0$ be an arbitrary number. Then the spectrum of the operator $\mathcal{A}\e$ in $[0,L]$ has under the assumptions stated above the following structure for $\eps$ small enough:
\begin{gather}\label{main1}
\sigma(\mathcal{A}\e)\cap [0,L]=[0,L]\setminus
(\a\e,\b\e),
\end{gather}
where the endpoints of the interval $(\a\e,\b\e)$ satisfy the relations
\begin{gather}\label{main2}
\liml_{\eps\to 0}\a\e=\a,\quad \liml_{\eps\to 0}\b\e=\b.
\end{gather}
\end{theorem}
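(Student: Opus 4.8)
The plan is to exploit the periodicity of $\mathcal{A}\e$ via the Floquet--Bloch decomposition and then to perform a dimension-reduction and homogenization-type analysis of the resulting family of operators on the (scaled) period cell. First I would rescale the whole problem by $x\mapsto \eps^{-1}x$, which turns $\Omega\e$ into the $\eps$-independent cylinder $\Omega=\{x:\ x'\in\omega\}$ with period cell $Y$, transforms the Laplacian into $\eps^{-2}$ times the Laplacian on $\Omega$, and replaces the coupling $a\e$ on $S_i\e$ by $\eps^{-1}a\e$ on $S+ie_n$; by assumption \eqref{a} this latter constant converges to $a$. Because of the $\eps^{-2}$ prefactor, the transverse Neumann Laplacian on $\eps\omega$ contributes a huge gap above its lowest eigenvalue $0$, so on any fixed window $[0,L]$ only the transverse ground state (the constant in $x'$) is relevant; the essential content of the theorem is that, modulo a vanishing error, $\mathcal{A}\e$ restricted to that sector behaves like a one-dimensional operator on $\mathbb{R}$ with a periodic $\delta'$-type point interaction, but with the jump condition averaged over the cross-section in a way that produces precisely the constants $\a$ and $\b$.

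The key steps, in order, are: (i) Apply the Gelfand transform in the axial variable to write $\sigma(\mathcal{A}\e)=\bigcup_{\theta\in[0,2\pi)}\sigma(\mathcal{A}\e_\theta)$, where $\mathcal{A}\e_\theta$ acts on the rescaled period cell $Y\setminus S$ with $\theta$-quasiperiodic conditions on the lateral faces $\{x_n=\pm1/2\}$, the $\delta'$ condition \eqref{Scond} on $S$ (with constant $\eps^{-1}a\e$), and Neumann conditions on $\partial\omega\times(-1/2,1/2)$; each $\mathcal{A}\e_\theta$ has compact resolvent, so discrete spectrum $\lambda_1\e(\theta)\le\lambda_2\e(\theta)\le\cdots$. (ii) Identify the limit operator $\mathcal{A}^0_\theta$ as $\eps\to0$: on functions independent of $x'$, the form $\eta\e$ degenerates to $\int |u'|^2 + a\,|S|\,|u_+-u_-|^2$ over the two intervals making up $Y$, where one interval has "length" (i.e.\ weight) $|B|/|Y|\cdot|Y|=|B|$ inside $B$ and $|Y|-|B|$ outside; more precisely, the reduced model lives on a two-component configuration, the "inside" component (mass $|B|$) and the "outside" interval with quasiperiodicity, coupled by a $\delta'$ condition with strength $a|S|$. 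A direct computation of the spectrum of this reduced model — which is essentially a pair of constants coupled through one point interaction — gives exactly two branches of eigenvalues on $[0,L]$ that fill $[0,\a]$ and $[\b,\infty)$ as $\theta$ runs over $[0,2\pi)$, with the gap $(\a,\b)$; here $\a=a|S|/|B|$ and $\b=a|S|/|B|\cdot|Y|/(|Y|-|B|)$ arise as the two "resonance" values from the pair of constant densities $|B|$ and $|Y|-|B|$. (iii) Prove that $\lambda_k\e(\theta)\to\lambda_k^0(\theta)$ uniformly in $\theta$ for $k$ up to the index needed to control $[0,L]$, with an explicit rate $O(\eps)$ or $O(\eps^2)$; combining with the band picture yields \eqref{main1}, and tracking the $\theta$-extrema of the two relevant branches (which are attained at $\theta=0$ and $\theta=\pi$, say) gives the band edges $\a\e,\b\e$ together with \eqref{main2}.

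For step (iii) I would use the abstract machinery of norm-resolvent (or at least spectral) convergence for forms: embed both $L_2(Y\setminus S)$ and the limit space via the projection onto $x'$-independent functions, show the forms $\eta\e_\theta$ $\Gamma$-converge (in the sense adapted to varying spaces) to $\eta^0_\theta$, and establish the needed compactness — the crucial trace and Poincar\'e inequalities on $\omega$ and on $Y\setminus B$ are exactly the ones quoted in the excerpt, now with $\eps$-explicit constants. The uniformity in $\theta$ follows because the $\theta$-dependence enters only through boundary conditions on a set of fixed geometry and the estimates are uniform there. The main obstacle, I expect, is handling the $\delta'$ interaction in the dimension-reduction limit: unlike a $\delta$ interaction, the $\delta'$ term lives on the "jump" $u_+-u_-$, so one must control the trace of $u$ from \emph{both} sides of $S\e$ simultaneously and show that, after rescaling, these traces are well approximated by the two constant values of the reduced two-component function, with the cross-sectional averaging producing the factor $|S|$; getting this with a quantitative rate that is uniform in $\theta$ — and checking that no spurious spectrum creeps into the gap $(\a\e,\b\e)$ from higher transverse modes — is the technical heart of the argument. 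Once the reduced spectral problem is solved explicitly and the convergence is quantified, \eqref{main1}--\eqref{main2} follow.
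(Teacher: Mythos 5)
Your overall frame (Floquet--Bloch reduction to fiber operators on the period cell, identification of $\a$ and $\b$ from a two-constant ``inside/outside'' computation, and the observation that everything except the lowest transverse mode escapes from $[0,L]$) matches the paper's strategy, and your constants are the right ones. But the convergence architecture in your step (iii) has a genuine gap: the uniform-in-$\theta$ convergence of band functions to those of a fixed reduced model cannot hold. For every fixed $\varphi\neq 0$ one has $\lambda_1^\varphi(\eps)\to\a$, while $\lambda_1^0(\eps)=0$ identically; likewise $\lambda_2^0(\eps)\to\b$ while $\lambda_2^\varphi(\eps)\to\infty$ for $\varphi\neq 0$ (this is the paper's Lemma \ref{lm1}, proved by rescaling to the fixed cell $Y$, where the coupling $a\e\eps\to0$ decouples trap and complement). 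So the pointwise limits of the band functions are \emph{discontinuous} at $\varphi=0$, and continuous functions cannot converge uniformly to them; in particular no fixed two-component model with continuous $\theta$-dependence reproduces the second branch, which diverges at every $\varphi\neq0$ and ``fills'' $[\b,\infty)$ only through the union over $\varphi$ at fixed $\eps$. Worse, mere pointwise convergence does not control the quantities you actually need, namely the band edges $\max_\varphi\lambda_1^\varphi(\eps)$ and $\min_\varphi\lambda_2^\varphi(\eps)$: a priori the maximum of the first branch could overshoot $\a$, or the minimum of the second branch dip below $\b$, near $\varphi=0$ where the convergence degenerates. Your fallback, that the extrema are attained at $\theta=0$ and $\theta=\pi$, is exactly what the paper's Remark 2.2(b) warns need not be true for $\mathbb{Z}$-periodic media, so it cannot be assumed.

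The paper closes precisely this gap with Dirichlet--Neumann bracketing: from the form-domain inclusions one gets $\lambda_k^N(\eps)\leq\lambda_k^\varphi(\eps)\leq\lambda_k^D(\eps)$ for all $\varphi$, so it suffices to show that $\lambda_1^D(\eps)$ and $\lambda_1^{\pi}(\eps)$ both tend to $\a$ (Lemmata \ref{lm2}, \ref{lm4}) and that $\lambda_2^N(\eps)$ and $\lambda_2^{0}(\eps)$ both tend to $\b$ (Lemmata \ref{lm3}, \ref{lm4}), with Lemma \ref{lm1} pushing $\lambda_2^\varphi(\eps)$, $\varphi\neq0$, beyond $L$. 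This squeezes each band edge between two quantities with the same limit and never requires knowing where in the Brillouin zone the edge sits, nor any uniformity in $\varphi$. The individual limits are then obtained exactly in the spirit of your reduced model, but quantitatively: one inserts the piecewise-constant trial functions (your two constants) into the min-max principle and controls the remainder via $\eps$-explicit Poincar\'e, Friedrichs and trace inequalities on the scaled cell. If you want to salvage your scheme, you must either replace the uniform-convergence claim by such a bracketing device (or some other mechanism bounding $\max_\varphi\lambda_1^\varphi$ from above and $\min_\varphi\lambda_2^\varphi$ from below), or prove monotonicity/edge-location statements in $\varphi$ that are not available in general.
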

The theorem will be proven in the next section. We postpone the outline of the proof to the remark preceding Lemma \ref{lm1} because we need to introduce first some more notations.


\section{\label{sec2} Proof of Theorem \ref{th1}.}

In what follows $C,C_1,\dots$ will be generic constants that do not depend on $\eps$. Let $D$ be an open domain in $\mathbb{R}^n$; by $\langle u \rangle_D$ we denote the normalized mean value of the function $u(x)$ in the domain $D$,
$$\langle u \rangle_D={1\over |D|}\intl_D u(x)\,\D x.$$
Furthermore, if $\Gamma\subset \mathbb{R}^n$ is an $(n-1)$-dimensional surface then the Euclidean metrics in $\mathbb{R}^n$ induces on $\Gamma$ the Riemannian metrics and measure. We denote by $\D s$ the density of this measure. Again by $\langle u\rangle_\Gamma$ we denote the normalized mean value of the function $u$ over $\Gamma$, i.e
$$\langle u\rangle_\Gamma=\ds{1\over |\Gamma|}\intl_{\Gamma}u \,\D s.$$
Next we introduce the following sets:

\begin{itemize}
\setlength{\itemsep}{3pt}

\item $Y\e=\eps Y$, the period cell,

\item $B\e=\eps B$, the trap,

\item $S\e=\eps S$, the trap boundary,

\item $F\e=Y\e\setminus\overline{B\e}$, the trap complement to the period cell,

\item $S_\pm\e=\left\{x=(x',x_n)\in\partial Y\e:\ x_n=\pm{\eps/2}\right\}$, the period cell ``lids''.

\end{itemize}

The Floquet-Bloch theory --- see, e.g., \cite{BHPW11,Ku93,RS78} --- establishes a relationship between the spectrum of $\A\e$ and the spectra of appropriate operators on $Y\e$. Specifically, for $\varphi\in [0,2\pi)$ we introduce the functional space $H_\varphi^1(Y\e\setminus S\e)$ consisting of functions from $H^1(Y\e\setminus S\e)$ that satisfy the following condition on the lateral parts of $\partial Y\e$:
\begin{gather}\label{theta1}
u|_{S_+\e}=\exp(i\varphi)T\e u|_{S_-\e},
\end{gather}
where $T\e:L_2(S_-\e)\to L_2(S_+\e)$, $(T\e f)(x)=f(x-\eps e_n)$, $u|_{S_\pm\e}$ are the traces of $u$ on $S_{\pm}\e$.

By $\eta_\varphi^{\eps}$ we denote the sesquilinear form defined by formula
\begin{gather}\label{eta+}
\eta_\varphi\e[u,v]=\intl_{Y\e\setminus S\e}\nabla u\cdot\nabla\bar{v} \D x+a\e\intl_{S\e}(u_+-u_-)\overline{(v_+-v_-)}\,\D s
\end{gather}
with the domain $H_\varphi^1(Y\e\setminus S\e)$. We define $\A_\varphi^{\eps}$ as the operator acting in $L_{2}(Y\e)$ being associated with the form $\eta_\varphi^{\eps}$:
\begin{gather*}
(\A_\varphi^{\eps} u,v)_{L_2(Y\e)}= \eta_\varphi^{\eps}[u,v],\quad\forall u\in
\mathrm{dom}(\A_\varphi^{\eps}),\  \forall v\in \mathrm{dom}(\eta_\varphi^{\eps}).
\end{gather*}
Since $Y\e\setminus S\e$ is compact, the operator $\A_\varphi^{\eps}$ has a purely discrete spectrum. We denote by $\left\{\lambda^{\phi}_{k} (\eps)\right\}_{k\in\mathbb{N}}$ the sequence of eigenvalues of $\A_\phi^{\eps}$ arranged in the increasing order and repeated according to their multiplicity.

According to the Floquet-Bloch theory one has the following representation:
\begin{gather}\label{repres1}
\sigma(\A\e)=\cupl_{k=1}^\infty \cupl_{\varphi\in [0,2\pi)}
\left\{\lambda^{\phi}_{k}(\eps)\right\}.
\end{gather}
Moreover, for any fixed $k\in\mathbb{N}$ the set $\cupl_{\varphi\in [0,2\pi)} \left\{\lambda^{\phi}_{k}(\eps)\right\}$, in other words, the $k$th spectral band, is a compact interval.

We also introduce the operators $\mathcal{A}\e_N$ and $\mathcal{A}\e_D$, which are defined in a similar way as $\mathcal{A}\e_\varphi$, however, with \eqref{theta1} replaced by Neumann and Dirichlet boundary conditions of $S\e_{\pm}$, respectively. More precisely, we denote by $\eta^{\eps}_N$ (correspondingly, $\eta_D^{\eps}$) the sesquilinear forms in $L_2(Y\e)$ defined by (\ref{eta+}) and the domain $H^1(Y\e\setminus S\e)$ (correspondingly, $\widehat{H}^1_0(Y\e\setminus S\e)=\left\{u\in H^1(Y\e\setminus S\e):\ u=0\text{ on }S_{+}\e\cup S_-\e\right\}$). The above indicated operators are then associated with these forms, $\eta_N^{\eps}$ and $\eta_D^{\eps}$, respectively, i.e.
\begin{gather*}
(\A_*^{\eps} u,v)_{L_2(Y\e)}= \eta_*^{\eps}[u,v],\quad\forall u\in
\mathrm{dom}(\A_*^{\eps}),\ \forall v\in \mathrm{dom}(\eta_*^{\eps}),
\end{gather*}
where $*$ is $N$ (correspondingly, $D$).

As with $\A_\varphi^{\eps}$, the spectra of the operators $\A_N^{\eps}$ and $\A_D^{\eps}$ are purely discrete. We denote by $\left\{\lambda_k^N (\eps)\right\}_{k\in\mathbb{N}}$ (correspondingly, $\left\{\lambda_k^D(\eps)\right\}_{k\in\mathbb{N}}$) the sequence of eigenvalues of $\A_N^{\eps}$ (correspondingly, of $\A_D^{\eps}$) arranged in the ascending order and repeated according to their multiplicity.

From the min-max principle --- see, e.g., \cite[Chapter XIII]{RS78} --- and the inclusions
$$H^1(Y\e\setminus S\e) \supset H^1_\varphi (Y\e\setminus S\e)\supset \widehat{H}^1_0(Y\e\setminus S\e)$$
we infer that
 \begin{gather}\label{enclosure}
\forall k\in \mathbb{N},\ \forall\varphi\in [0,2\pi):\quad
\lambda_k^N(\eps) \leq \lambda_k^\varphi(\eps) \leq
\lambda_k^D(\eps).
\end{gather}

\begin{remarks}\label{rem1a} (a) With these preliminaries, we are able to provide the promised brief description of the proof of Theorem \ref{th1}. It is clear that the left edge of the first spectral band of $\mathcal{A}\e$ coincides with zero, while the right one is situated between the first antiperiodic eigenvalue $\lambda_{1}^\varphi(\eps)$, $\varphi=\pi$, and the first Dirichlet eigenvalue $\lambda_{1}^D(\eps)$. We are going to prove (see Lemmata \ref{lm2} and \ref{lm4} below) that they both converge to $\alpha$ as $\eps\to 0$. Similarly we can localize the left edge of the second spectral band between the second Neumann eigenvalue $\lambda_{2}^N(\eps)$ and the second periodic eigenvalue $\lambda_{2}^\varphi(\eps)$, $\varphi=0$, of which we will prove (see Lemmata \ref{lm3} and \ref{lm4} below) that they both converge to $\beta$ as $\eps\to 0$. Finally, we intend to  prove that $\lambda_{2}^\varphi(\eps)$, $\varphi\not= 0$, converges to infinity as $\eps\to 0$ which means that the right edge of the second spectral band exceeds any fixed $L$ provided $\eps$ is small enough. These results taken together constitute the claim of Theorem~\ref{th1}.

\smallskip

\label{rem1b} \noindent (b) We stress that the band edges need not in general coincide with the corresponding periodic (antiperiodic) solutions even if the system exhibits periodicity in one direction only \cite{HKSW07, EKW10}. What matters is that we can squeeze them between two values which converge to the same limit as $\eps\to 0$.
\end{remarks}

\begin{lemma}\label{lm1}
Let $\varphi\not= 0.$ In the limit $\eps\to 0$ one has
\begin{gather}\label{Phi2}
\lambda_2^\varphi(\eps)\to \infty.
\end{gather}
\end{lemma}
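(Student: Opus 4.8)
The plan is to establish a lower bound for $\lambda_2^\varphi(\eps)$ that diverges as $\eps\to 0$ whenever $\varphi\neq 0$. Heuristically, the lowest eigenfunctions of $\A_\varphi^\eps$ concentrate on the two ``pieces'' of the period cell separated by the $\delta'$ barrier: the trap $B\e$ and its complement $F\e=Y\e\setminus\overline{B\e}$. Because the cross-section scales like $\eps$ while the period length is also $\eps$, a Poincar\'e-type inequality on each piece costs essentially nothing, so a function can be $\eps$-close in energy to a pair of constants $(c_B,c_F)$ on $(B\e,F\e)$. The $\varphi$-quasiperiodic condition \eqref{theta1} together with $\varphi\neq 0$ forces the constant $c_F$ on the connected exterior part to be suppressed; once $c_F$ is killed, the remaining one-dimensional reduced problem has a strictly positive first eigenvalue, which after the unscaling blows up like $\eps^{-2}$. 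I expect the final bound to be of the form $\lambda_2^\varphi(\eps)\geq C(\varphi)\eps^{-2}\to\infty$, or at least $\lambda_2^\varphi(\eps)\geq c(\varphi)/\eps$ via the surface term.

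The key steps, in order, are as follows. First, rescale $Y\e=\eps Y$ by $x\mapsto \eps y$; this turns $\A_\varphi^\eps$ into $\eps^{-2}$ times an operator on the fixed cell $Y$ with a $\delta'$ coupling of strength $\eps a\e\to 0\cdot a$ wait — rather $a\e\eps$, which by \eqref{a} behaves like $a\eps^2$, so in rescaled variables the surface term carries a coefficient $\sim a\eps^2$, hence the rescaled form is $\eps^{-2}\big(\|\nabla_y u\|^2_{L_2(Y\setminus S)}+a\e\eps\,\|u_+-u_-\|^2_{L_2(S)}\big)$ with $a\e\eps\to 0$. Second, use the min-max characterization of $\lambda_2$: it suffices to show that on any two-dimensional subspace of $H_\varphi^1(Y\setminus S)$ the rescaled Rayleigh quotient is bounded below by a positive constant $\gamma(\varphi)>0$ for $\eps$ small, equivalently that the limiting quadratic form $u\mapsto\|\nabla_y u\|^2_{L_2(Y\setminus S)}$ on $H_\varphi^1(Y\setminus S)$ (now with \emph{free} transmission across $S$ because the surface coefficient vanishes) has $\lambda_2\geq\gamma(\varphi)>0$. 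Third, compute this limiting $\lambda_2$: the form $\|\nabla_y u\|^2$ on $H_\varphi^1(Y\setminus S)$ with no constraint on $S$ is just the Laplacian on the cylinder cell $Y$ (the surface $S$ is now invisible) with $\varphi$-quasiperiodic conditions on the lids and Neumann on the lateral boundary; its spectrum is $\{(\varphi+2\pi m)^2:m\in\mathbb Z\}$ plus transverse contributions, so $\lambda_1=\varphi^2>0$ when $0<\varphi<2\pi$ (for $\varphi=\pi$, $\lambda_1=\pi^2$), and in particular $\lambda_2>0$. Unscaling multiplies by $\eps^{-2}$, giving $\lambda_2^\varphi(\eps)\to\infty$.

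To make this rigorous one needs a $\Gamma$-convergence / norm-resolvent type argument: show that the rescaled forms converge (in the sense sufficient for convergence of the first two eigenvalues) to the limiting Laplacian form with free transmission across $S$. The main obstacle is controlling the $\delta'$ surface term uniformly — one must verify it does not produce a spurious low mode as its coefficient $a\e\eps\to0$; this is handled by the trace inequality already displayed in the excerpt (with the $\eps$-dependence of $C(\eps)$ made explicit by scaling, $C(\eps)\sim\eps^{-1}$), which shows $a\e\eps\,\|u_+-u_-\|^2_{L_2(S)}\leq a\e\eps\cdot C\eps^{-1}\|u\|_{H^1}^2 = O(\eps)\,\|u\|_{H^1}^2$, a negligible perturbation. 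The second, more delicate point is the quasiperiodic Poincar\'e estimate ensuring that the only way for the gradient term to be small is to approach a function that is constant on $B$ and on $F$ separately, while the lid condition \eqref{theta1} with $\varphi\neq0$ then excludes both constants being compatible with zero energy; equivalently, one shows directly that the limiting form above has trivial kernel, which is the computation in the third step.
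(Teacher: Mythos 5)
Your overall strategy (rescale to the fixed cell $Y$, observe that the rescaled surface term has coefficient $a\e\eps\to 0$ and is an $O(\eps)$ relative form perturbation, deduce convergence of the rescaled eigenvalues to those of the limiting form, and then unscale by $\eps^{-2}$) is exactly the paper's route, and your perturbation estimate via the trace inequality would even give the eigenvalue convergence a bit more cheaply than the paper's argument (which uses strong convergence of resolvents plus a compactness lemma of Iosifyan--Oleinik--Shamaev). However, there is a genuine error in your identification of the limiting operator, and it is precisely at the step where positivity has to come from. You claim that when the surface coefficient vanishes the limit has ``free transmission across $S$'' so that ``the surface $S$ is now invisible'', i.e.\ the limit would be the Laplacian on the whole cell $Y$ with $\varphi$-quasiperiodic lids, with first eigenvalue $\varphi^2>0$. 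This is backwards: in the form $\|\nabla u\|^2_{L_2(Y\setminus S)}+c_\eps\|u_+-u_-\|^2_{L_2(S)}$ on $H^1_\varphi(Y\setminus S)$, free transmission ($u_+=u_-$, form domain effectively $H^1(Y)$) corresponds to $c_\eps\to\infty$, whereas $c_\eps=a\e\eps\to 0$ makes the jump across $S$ costless, so in the limit the trap decouples from its complement. The limiting operator is the direct sum $-\Delta_\varphi(F)\oplus(-\Delta_N(B))$ (Neumann on both sides of $S$), and its lowest eigenvalue is $0$, realized by the constant function on $B$, which trivially satisfies \eqref{theta1} since it is supported away from the lids. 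Hence your claim ``$\lambda_1=\varphi^2>0$'' for the limit is false; indeed, were it true, the same reasoning would give $\lambda_1^\varphi(\eps)\to\infty$, contradicting Lemma \ref{lm4}, which shows $\lambda_1^\pi(\eps)\to\alpha<\infty$.

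What saves the statement — and what the paper actually proves — is that the \emph{second} eigenvalue of the correct (decoupled) limit operator is strictly positive for $\varphi\neq 0$: the $F$-block has no zero mode because constants on $F$ violate the quasiperiodic condition when $\varphi\neq0$, and the $B$-block has a simple zero eigenvalue, so $\tilde\lambda_2^\varphi>0$ while $\tilde\lambda_1^\varphi=0$. With that correction your argument goes through: eigenvalue convergence of the rescaled problem gives $\eps^2\lambda_2^\varphi(\eps)\to\tilde\lambda_2^\varphi>0$, hence \eqref{Phi2}. So the fix is local (replace the spectral computation of your step three by the analysis of the decoupled operator), but as written the key positivity step rests on the wrong limit and the wrong eigenvalue.
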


\begin{proof}
We denote
\begin{itemize}
\setlength{\itemsep}{3pt}

\item $F=\eps^{-1}F\e$, the scaled trap complement to the scaled period cell,

\item $S_{\pm}=\eps^{-1}S\e_\pm$, the scaled period cell ``lids''.

\end{itemize}

We introduce the sesquilinear form $\tilde{\eta}\e_{\varphi}$ in the space $L_2(Y)$ defined by the formula
\begin{gather*}
\tilde{\eta}_\varphi\e[u,v]=\intl_{Y \setminus S}\nabla u\cdot\nabla\bar{v} \,\D x+a\e\eps \intl_{S}(u_+-u_-)\overline{(v_+-v_-)}\,\D s,
\end{gather*}
with
\begin{gather*}
\mathrm{dom}(\tilde{\eta}_{\varphi}\e)=\left\{u\in H^1(Y\setminus S):\ u|_{S_+}=\exp(i\varphi)T u|_{S_-}\right\},
\end{gather*}
where $T:L_2(S_-)\to L_2(S_+)$, $\:(T f)(x)=f(x- e_n)$, and $u|_{S_\pm}$ are the traces of $u$ on $S_{\pm}$. Let $\tilde{\mathcal{A}}\e_\varphi$ be the operator associated with this form and let $\left\{\tilde{\lambda}^{\varphi}_{k}(\eps)\right\}_{k\in\mathbb{N}}$ be the sequence of its eigenvalues arranged in the ascending order and repeated according to their multiplicity. It is easy to see that for all $k\in\mathbb{N}$
\begin{gather}\label{lambda-lambda}
\tilde{\lambda}^{\varphi}_{k}(\eps)=\eps^{2}{\lambda}^{\varphi}_{k}(\eps).
\end{gather}
In the same Hilbert space $L_2(Y)$ we introduce the sesquilinear form $\tilde{\eta}_{\varphi}$ by the formula
\begin{gather*}
\tilde{\eta}_\varphi[u,v]=\intl_{Y \setminus S}\nabla u\cdot\nabla\bar{v}\, \D x
\end{gather*}
with $\mathrm{dom}(\tilde{\eta}_\varphi)=\mathrm{dom}(\tilde{\eta}_\varphi\e)$, and by $\tilde{\mathcal{A}}_\varphi$ we denote the operator generated by this form, with $\left\{\tilde{\lambda}^{\varphi}_{k}\right\}_{k\in\mathbb{N}}$ being the sequence of eigenvalues of $\tilde{\mathcal{A}}_\varphi$ written in the increasing order and repeated according to their multiplicity. It is clear from the definition that
$$\tilde{\mathcal{A}}_\varphi=-\Delta_\varphi(F)\oplus -\Delta(B),$$
where $\Delta_\varphi(F)$ (respectively, $\Delta(B)$) is the Laplacian in $L_2(F)$ (respectively, $L_2(B)$) with the Neumann boundary conditions on $\partial F\setminus (S_-\cup S_+)$  and $\varphi$-periodic boundary conditions on $S_-\cup S_+$ (respectively, with the Neumann boundary condition on $S$). One can check easily that for any $\varphi\not=0 $ we have
\begin{gather}\label{lambda_bf_prop}
\tilde{\lambda}_2^\varphi>0.
\end{gather}
To conclude the argument we are going to demonstrate that
\begin{gather}\label{lambda_bf}
\forall k\in\mathbb{N}:\  \tilde{\lambda}^{\varphi}_{k}(\eps)\to \tilde{\lambda}^{\varphi}_{k}
\end{gather}
holds, then \eqref{Phi2} will follow directly from \eqref{lambda-lambda}--\eqref{lambda_bf}. It remains therefore to prove \eqref{lambda_bf}. We denote
$$\tilde{L}_\varphi\e=(\tilde{\mathcal{A}}_\varphi\e+I)^{-1},\quad \tilde{L}_\varphi=(\tilde{\mathcal{A}}_\varphi+I)^{-1},$$
where $I$ is the identity operator. The operators $\tilde{L}\e_\varphi$, $\tilde{L}_\varphi$ are compact and positive (thus self-adjoint), and furthermore
\begin{gather}\label{c2}
\|\tilde{L}_\varphi\e\|\leq 1.
\end{gather}
Next we want to prove that
\begin{gather}\label{c3}
\forall f\in L_2(Y):\ \tilde{L}_\varphi\e f\to\tilde{L}_\varphi f\;\text{ in } L_2(Y)\;\text{ as }\;\eps\to 0.
\end{gather}
We take an arbitrary $f\in L_2(Y)$ and set $u\e=\tilde{L}_\varphi\e f$.  It is clear that
$$\tilde{\eta}_\varphi\e[u\e,u\e]+\|u\e\|_{L_2(Y)}\leq C,$$
which, in particular, implies that the functions $u\e$ are bounded in $H^1(Y\setminus S)$ uniformly in $\eps$. Therefore by the Rellich-Kondrachov embedding theorem there exist $u\in H^1(Y\setminus S)$ and a subsequence $\eps_k$, $\:k=1,2,3\dots$, satisfying $\eps_k\searrow 0$ as $\eps\to 0$ such that
\begin{gather}\label{conv1bf}
u\e\rightharpoonup u\;\text{ in }H^1(Y\setminus S)\;\text{ as }\;\eps=\eps_k\to 0,\\
\label{conv1+bf}
u\e\to u\;\text{ in }L_2(Y)\;\text{ as }\;\eps=\eps_k\to 0.
\end{gather}
Furthermore, in view of the trace theorem
\begin{gather}\label{conv2bf}
u\e\to u\;\text{ in }L_2(\partial Y\cup S)\;\text{ as }\;\eps=\eps_k\to 0,
\end{gather}
and consequently, $u\in \mathrm{dom}(\tilde{\eta}_\varphi)$. Given an arbitrary $v\in \mathrm{dom}(\tilde{\eta}_\varphi\e)$, we have the following identity:
\begin{gather}\label{int_in_bf}
\intl_{Y \setminus S}\nabla u\e\cdot\nabla\bar{v}\,\D x+a\e\eps \intl_{S}(u\e_+-u\e_-)\overline{(v_+-v_-)}\D s+\intl_{Y}u\e \bar{v}\D x= \intl_{Y}f\bar{v}\,\D x.
\end{gather}
Using  \eqref{conv1bf}--\eqref{conv2bf} and keeping in mind that $a\e\eps\to 0$ holds as $\eps\to 0$ we pass to the limit in \eqref{int_in_bf} as $\eps=\eps_k\to 0$ and obtain
\begin{gather}\label{int_ineq_bf_final}
\intl_{Y \setminus S}\nabla u\cdot\nabla\bar{v} \,\D x+\intl_{Y}u \bar{v}\D x= \intl_{Y}f\bar{v}\,\D x,\quad \forall v\in \mathrm{dom}(\tilde{\eta}_\varphi).
\end{gather}
It follows from \eqref{int_ineq_bf_final} that $u=\tilde{L}_\varphi f$. Since $u$ is independent of the subsequence $\eps_k$, the whole sequence $u\e$ converges to $u$ as $\eps\to 0$, in other words, \eqref{c3} holds true.

Finally, again using the Rellich-Kondrachev embedding theorem, we conclude that for an arbitrary sequence $f\e$, which is bounded in $L_2(Y)$ uniformly in $\eps$, there exist $w\in L_2(Y)$ and a subsequence $\eps_k$, $\:k=1,2,3\dots$, with $\eps_k\underset{k\to\infty}\searrow 0$ such that
\begin{gather}\label{c4}
\tilde{L}_\varphi\e f\e\to w\;\text{ in }L_2(Y)\;\text{ as }\;\eps=\eps_k\to 0.
\end{gather}
We denote by $\left\{\tilde{\mu}_{k}^\varphi(\eps)\right\}_{k\in\mathbb{N}}$ and $\left\{\tilde{\mu}_{k}^\varphi\right\}_{k\in\mathbb{N}}$ the
sequences of eigenvalues of $\tilde{L}_\varphi^{\eps}$ and $\tilde{L}_\varphi$, respectively, written in the decreasing order and repeated according to their multiplicity. According to Lemma~1 from \cite{IOS89} it follows from \eqref{c2}, \eqref{c3}, and \eqref{c4} that
\begin{gather}\label{mu}
\forall k\in\mathbb{N}:\ \tilde{\mu}^\varphi_k(\eps)\to\tilde{\mu}^\varphi_k\;\text{ as }\;\eps\to 0.
\end{gather}
However, since $\tilde{\lambda}_k^\varphi(\eps)={1\over \tilde{\mu}^\varphi_k(\eps)}-1$ and  $\tilde{\lambda}_k^\varphi={1\over \tilde{\mu}^\varphi_k}-1$,
\eqref{mu} implies the sought relation \eqref{lambda_bf} concluding thus the proof of the lemma.
\end{proof}

Now we have to inspect the behaviour of the first Dirichlet eigenvalue, which we use to estimate the left gap edge from above, in the the limit $\eps\to 0$.

\begin{lemma}\label{lm2}
One has
\begin{gather}\label{convD}
\lambda_1^D(\eps)\to \alpha\;\text{ as }\;\eps\to 0.
\end{gather}
\end{lemma}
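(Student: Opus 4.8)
The plan is to prove \eqref{convD} by squeezing $\lambda_1^D(\eps)$ between an explicit upper bound and a variational lower bound, both tending to $\a$; throughout I use the min-max characterisation $\lambda_1^D(\eps)=\min\{\eta_D\e[u,u]/\|u\|^2_{L_2(Y\e)}:\,0\neq u\in\widehat H^1_0(Y\e\setminus S\e)\}$. For the upper bound I take the trial function $u\e$ equal to $1$ on $B\e$ and to $0$ on $F\e$. It belongs to $\widehat H^1_0(Y\e\setminus S\e)$ since it vanishes on the lids $S\e_\pm\subset F\e$, its gradient vanishes off $S\e$, and its jump across $S\e$ is identically $1$; hence
$$\lambda_1^D(\eps)\le\frac{\eta_D\e[u\e,u\e]}{\|u\e\|^2_{L_2(Y\e)}}=\frac{a\e|S\e|}{|B\e|}=\frac{a\e}{\eps}\cdot\frac{|S|}{|B|}\longrightarrow a\,\frac{|S|}{|B|}=\a ,$$
so that $\limsup_{\eps\to0}\lambda_1^D(\eps)\le\a$; in particular the family $\{\lambda_1^D(\eps)\}$ is bounded.

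For the lower bound I rescale to the $\eps$-independent cell $Y$. Let $u\e$ now denote a (real-valued) first Dirichlet eigenfunction normalised by $\|u\e\|_{L_2(Y\e)}=1$, and put $\tilde u\e(y)=\eps^{n/2}u\e(\eps y)$ for $y\in Y$, so $\|\tilde u\e\|_{L_2(Y)}=1$ and $\tilde u\e=0$ on $S_-\cup S_+$. The substitution $x=\eps y$ transforms the eigenvalue identity into
$$\lambda_1^D(\eps)=\eps^{-2}\intl_{Y\setminus S}|\nabla\tilde u\e|^2\,\D y+\frac{a\e}{\eps}\intl_S|\tilde u\e_+-\tilde u\e_-|^2\,\D s .$$
Since the left-hand side is bounded, the first term forces $\|\nabla\tilde u\e\|^2_{L_2(Y\setminus S)}\le C\eps^2\to0$, while the second term stays bounded. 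Hence $\tilde u\e$ is bounded in $H^1(Y\setminus S)=H^1(F)\oplus H^1(B)$, and along a subsequence realising $\liminf_{\eps\to0}\lambda_1^D(\eps)$ we may assume, by the Rellich--Kondrachov and trace theorems, that $\tilde u\e\rightharpoonup u$ in $H^1(Y\setminus S)$, $\tilde u\e\to u$ in $L_2(Y)$, and the traces of $\tilde u\e$ converge in $L_2(S\cup S_-\cup S_+)$. Because $\nabla\tilde u\e\to0$ strongly, $\nabla u=0$, so $u$ is a constant $c_F$ on $F$ and a constant $c_B$ on $B$ (here the connectedness of $F=Y\setminus\overline B$ is used). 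Passing to the limit in the Dirichlet condition on the lids gives $c_F=0$, and the $L_2$-normalisation then forces $c_B^2|B|=1$.

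Finally, the convergence of the traces yields $\tilde u\e_-\to c_B$ and $\tilde u\e_+\to c_F=0$ in $L_2(S)$, so $\|\tilde u\e_+-\tilde u\e_-\|^2_{L_2(S)}\to c_B^2|S|=|S|/|B|$; combining this with $a\e/\eps\to a$ and dropping the nonnegative gradient term in the displayed identity we get
$$\liminf_{\eps\to0}\lambda_1^D(\eps)\ge\liml_{\eps\to0}\frac{a\e}{\eps}\intl_S|\tilde u\e_+-\tilde u\e_-|^2\,\D s=a\,\frac{|S|}{|B|}=\a ,$$
which together with the upper bound proves \eqref{convD}. The one genuinely delicate step is the identification of the limit: the prefactor $\eps^{-2}$ makes the limiting Rayleigh quotient look indeterminate, and the point is that it is precisely this term which collapses $\tilde u\e$ onto functions that are constant on $B$ and vanish on $F$, after which the entire limit is carried by the $\delta'$-term; keeping track of the Dirichlet condition on the lids (so that $c_F=0$, not just $c_F$ arbitrary) and of the correct orientation of the jump across $S$ is what makes the constant come out as $\a=a|S|/|B|$.
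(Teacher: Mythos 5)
Your proof is correct, and while the upper bound is exactly the paper's (the same trial function, the normalised indicator of $B\e$, plugged into the min-max principle to get $\limsup_{\eps\to0}\lambda_1^D(\eps)\le\a$), your lower bound takes a genuinely different route. The paper stays on the shrinking cell $Y\e$ and argues quantitatively: Friedrichs/Poincar\'e inequalities with the $\eps^2$ scaling (\eqref{uD_est3}--\eqref{uD_est6}) show the eigenfunction is close to $v_D\e$, then the decomposition $u\e_D=v\e_D+w\e_D$ combined with $\eps$-scaled trace inequalities gives $\eta\e_D[w_D\e,w_D\e]\to0$ and hence \eqref{finalD}. You instead rescale to the fixed cell $Y$, note that boundedness of $\lambda_1^D(\eps)$ forces $\|\nabla\tilde u\e\|^2_{L_2(Y\setminus S)}=O(\eps^2)$, and use Rellich--Kondrachov together with compactness of the trace maps on the Lipschitz pieces $F$ and $B$ to identify the limit of $\tilde u\e$ as $0$ on $F$ and $|B|^{-1/2}$ on $B$, so that the jump term alone carries the limit; this is much closer in spirit to the paper's proof of Lemma \ref{lm1} than to its proof of Lemma \ref{lm2}. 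The trade-off: the paper's corrector estimates avoid subsequence extraction and would in principle yield a rate of convergence, whereas your soft compactness argument is shorter and avoids the bookkeeping of trace constants on $\eps$-scaled domains, but gives no rate. Two minor remarks: your identification of the limit as a single constant on $F$ (and the conclusion $c_F=0$) uses that $F=Y\setminus\overline{B}$ is connected and reaches the lids, but the paper's Friedrichs inequality \eqref{uD_est3} implicitly requires the same, so you are on equal footing; and your scaling $\tilde u\e(y)=\eps^{n/2}u\e(\eps y)$ together with $\D s_{S\e}=\eps^{n-1}\D s_{S}$ is exactly what validates your displayed identity for $\lambda_1^D(\eps)$, so that step is sound.
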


\begin{proof}
Let $u_D\e$ be the eigenfunction of $\A\e_D$, which corresponds to $\lambda_1^D(\eps)$, determined uniquely by the following requirements:
\begin{gather}\label{normalizationD}
\|u_D\e\|_{L_2(Y\e)}=1,\\\label{normalizationD+}
u\e_D\;\text{ is real-valued},\quad \intl_{B\e} u\e_D \,\D x\geq 0.
\end{gather}
From the min-max principle we get
\begin{gather}\label{minmaxD}
\lambda^D_1(\eps)=\eta_D\e[u\e_D,u_D\e]=\minl_{u\in \mathcal{H}_D\e}\eta_D\e[u,u],
\end{gather}
where $\mathcal{H}_D\e=\left\{u\in\mathrm{dom}(\eta_D\e):\ \|u\|_{L_2(Y\e)}=1\right\}$. We construct an approximation, denoted as $v\e_D$, to the eigenfunction $u\e_D$ using the formula
\begin{gather}\label{vD}
v\e_D(x)=\begin{cases}0,&x\in F\e,\\|B\e|^{-1/2},&x\in B\e.\end{cases}
\end{gather}
It is clear that $v\e_D\in\mathrm{dom}(\eta_D^\eps)$. Taking into account that $\nabla v\e_D=0$ holds in $B\e\cup F\e$ we get
\begin{gather}\label{vD_est1}
\eta_D^\eps[v\e_D,v\e_D]={a\e\intl_{S\e}((v\e_D)_+-(v\e_D)_-)^2 \D x}={a\e |S\e|\over |B\e|}={a\e |S|\over \eps|B|}\sim \a\;\text{ as }\;\eps\to 0,\\\label{vD_est2}
\|v\e_D\|_{L_2(Y\e)}=1,
\end{gather}
and therefore $v_D\e\in \mathcal{H}\e_D$. Consequently, in view of \eqref{minmaxD} we can infer that
\begin{gather}\label{minmaxD1}
\eta_D^\eps[u\e_D,u\e_D] \leq {\eta_D^\eps[v\e_D,v\e_D]}.
\end{gather}
If follows from \eqref{vD_est1} and \eqref{minmaxD1} that
\begin{gather}\label{lambdaD_est1}
\eta_D^\eps[u\e_D,u\e_D]\leq C,
\end{gather}
and using \eqref{lambdaD_est1} one arrives at the following Friedrichs- and Poincar\'{e}-type inequalities:
\begin{gather}\label{uD_est3}
\|u\e_D\|^2_{L_2(F\e)}\leq C\eps^2\|\nabla u\e_D\|^2_{L_2(F\e)}\leq C\eps^2\eta_D^\eps[u\e_D,u\e_D]\leq C_1\eps^2,\\\label{uD_est4}
\|u\e_D-\langle u\e_D \rangle_{B\e}\|^2_{L_2(B\e)}\leq C\eps^2\|\nabla u\e_D\|^2_{L_2(B\e)}\leq C\eps^2\eta\e_D[u\e_D,u\e_D]\leq C_1\eps^2,
\end{gather}
where $\langle\cdot\rangle$ as usual denotes the mean value. Moreover, using \eqref{normalizationD}, \eqref{uD_est3}, and \eqref{uD_est4} we get
\begin{gather*}
\left|\langle u\e_D \rangle_{B\e}\right|^2|B\e|=1-\|u\e_D\|^2_{L_2(F\e)}-\|u\e_D-\langle u\e_D \rangle_{B\e}\|^2_{L_2(B\e)}=1+\mathcal{O}(\eps^2),
\end{gather*}
which implies, taking into account \eqref{normalizationD+}, that
\begin{gather}\label{uD_est5}
\left(\langle u\e_D \rangle_{B\e}|B\e|^{1/2}-1\right)\to 0\;\text{ as }\;\eps\to 0.
\end{gather}
Finally, using \eqref{uD_est4} and \eqref{uD_est5} we conclude that
\begin{multline}\label{uD_est6}
\|u\e_D-|B\e|^{-1/2}\|^2_{L_2(B\e)}\leq 2\|u\e_D-\langle u\e_D \rangle_{B\e}\|^2_{L_2(B\e)}\\+2|B\e|\left|\langle u\e_D \rangle_{B\e}-|B\e|^{-1/2}\right|^2\to 0\;\text{ as }\;\eps\to 0.
\end{multline}

In the next step we represent the eigenfunction $u\e_D$ in the form of a sum,
\begin{gather}\label{represD}
u\e_D=v\e_D+w\e_D,
\end{gather}
and estimate the remainder $w\e_D$. Plugging \eqref{represD} into \eqref{minmaxD1} we obtain
\begin{multline}\label{Destim1}
\eta\e_D[w_D\e,w_D\e]\leq -2\eta\e_D [v_D\e,w_D\e]\\=
-2a\e\intl_{S\e}\left((v\e_D)_+-(v\e_D)_-\right)\left((w\e_D)_+-(w\e_D)_-\right)\,\D s.
\end{multline}
Let us recall the following standard trace inequalities:
$$\forall u\in H^1(F),v\in H^1(B):\quad \|u\|_{L_2(S)}^2\leq C\|u\|^2_{H^1(F)},\ \|v\|_{L_2(S)}^2\leq C\|v\|^2_{H^1(B)}.$$
Using them together with a change of variables, $x\mapsto x\eps$, one can derive the estimates
\begin{gather}\label{uD_est7}
\intl_{S\e}\left|(u\e_D)_+\right|^2\D s\leq C\eps^{-1}\left(\|u\e_D\|^2_{L_2(F\e)}+\eps^2\|\nabla u\e_D\|^2_{L_2(F\e)}\right),\\ \label{uD_est8}
\intl_{S\e}\left|(u\e_D)_--|B\e|^{-1/2}\right|^2\D s\leq C\eps^{-1}\left(\|u\e_D-|B\e|^{-1/2}\|^2_{L_2(B\e)}+\eps^2\|\nabla u\e_D\|^2_{L_2(B\e)}\right).
\end{gather}
Then taking into account \eqref{a} and \eqref{uD_est7}--\eqref{uD_est8} we obtain from \eqref{Destim1} the inequalities
\begin{multline*}
\left|\eta\e_D[w_D\e,w_D\e]\right|^2\leq 4(a\e)^2\intl_{S\e}\left|(v\e_D)_+-(v\e_D)_-\right|^2\D s\cdot \intl_{S\e}\left|(w\e_D)_+-(w\e_D)_-\right|^2\D s\\\leq 8 {(a\e)^2|S\e|\over |B\e|}\intl_{S\e}\left(\left|(u\e_D)_+\right|^2+\left|(u\e_D)_--|B\e|^{-1/2}\right|^2\right)\,\D s\\\leq C\left(\|u_D\e\|^2_{L_2(F\e)}+\|u\e_D-|B\e|^{-1/2}\|^2_{L_2(B\e)}+\eps^2\|\nabla u\e_D\|^2_{L_2(B\e\cup F\e)}\right),
\end{multline*}
thus in view of \eqref{lambdaD_est1}, \eqref{uD_est3}, and \eqref{uD_est6} we find
\begin{gather}\label{wD_est1}
\eta\e_D[w_D\e,w_D\e]\to 0\;\text{ as }\;\eps\to 0.
\end{gather}
It follows from \eqref{vD_est1}, \eqref{represD}, and \eqref{wD_est1} that
\begin{gather}\label{finalD}\lambda_1^D(\eps)=\eta\e_D[u_D\e,u_D\e]\sim \eta\e_D[v_D\e,v_D\e]\sim\alpha\;\text{ as }\;\eps\to 0,
\end{gather}
which is the claim we have set up to prove.
\end{proof}

Next we need an analogous claim for the second Neumann eigenvalue which we use to estimate the right gap edge from below.

\begin{lemma}\label{lm3}
One has
\begin{gather}\label{convN}
\lambda_2^N(\eps)\to \beta\;\text{ as }\;\eps\to 0.
\end{gather}
\end{lemma}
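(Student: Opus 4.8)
The plan is to mirror the strategy of Lemma \ref{lm2}, now using the min-max characterization of $\lambda_2^N(\eps)$ together with a well-chosen two-dimensional trial space. First I would observe that $\lambda_1^N(\eps)=0$ with the constant eigenfunction, so
\begin{gather*}
\lambda_2^N(\eps)=\minl_{\substack{u\in\mathrm{dom}(\eta_N\e)\setminus\{0\}\\ (u,1)_{L_2(Y\e)}=0}}{\eta_N\e[u,u]\over\|u\|^2_{L_2(Y\e)}}.
\end{gather*}
The natural guess for the asymptotically optimal profile is a function that is (essentially) a constant $c_B$ on $B\e$ and a constant $c_F$ on $F\e$, with the two constants chosen so that the mean over $Y\e$ vanishes, i.e. $c_B|B\e|+c_F|F\e|=0$. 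For such a profile the Dirichlet integral vanishes and the form reduces to $a\e|S\e|(c_B-c_F)^2$; normalizing and letting $\eps\to0$ using $a\e|S\e|/\eps=a\e|S|/\eps\cdot\eps^{n-2}\cdots$ — more precisely tracking the scaling exactly as in \eqref{vD_est1} — one computes that the Rayleigh quotient tends to ${a|S|\over|B|}\cdot{|Y|\over|Y|-|B|}=\b$. (One cannot literally take a piecewise-constant function since it is not in $H^1(Y\e\setminus S\e)$ across $S\e$ — wait, it is, since the form allows jumps across $S\e$; the issue is only whether it lies in $H^1$ of each piece, which it does.) This gives the upper bound $\limsup_{\eps\to0}\lambda_2^N(\eps)\le\b$.

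For the matching lower bound I would take the genuine second Neumann eigenfunction $u_N\e$, normalized by $\|u_N\e\|_{L_2(Y\e)}=1$ and $(u_N\e,1)_{L_2(Y\e)}=0$, and first establish the a priori bound $\eta_N\e[u_N\e,u_N\e]=\lambda_2^N(\eps)\le C$ (from the upper bound just proved). Friedrichs/Poincar\'e inequalities on $F\e$ and $B\e$ exactly as in \eqref{uD_est3}--\eqref{uD_est4} then give that $u_N\e$ is $\eps$-close in $L_2$ to its mean $\langle u_N\e\rangle_{F\e}$ on $F\e$ and to $\langle u_N\e\rangle_{B\e}$ on $B\e$. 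Writing $p\e=\langle u_N\e\rangle_{F\e}$, $q\e=\langle u_N\e\rangle_{B\e}$, the orthogonality to constants forces $p\e|F\e|+q\e|B\e|=\mathcal{O}(\eps)$ and the normalization forces $p\e{}^2|F\e|+q\e{}^2|B\e|=1+\mathcal{O}(\eps)$. One then extracts a convergent subsequence $(\eps^{-(n-1)/2}p\e,\eps^{-(n-1)/2}q\e)\to(p,q)$ (using the correct power of $\eps$ so that $|F\e|,|B\e|$ scale out) with $p|F|+q|B|=0$ and $p^2|F|+q^2|B|=1$. Passing to the limit in $a\e|S\e|\!\int_{S\e}\!|(u_N\e)_+-(u_N\e)_-|^2\ge$ (something converging to) ${a|S|}(p-q)^2\cdot(\text{geometric factors})$ — using the trace estimates \eqref{uD_est7}--\eqref{uD_est8} with $|B\e|^{-1/2}$ replaced by the appropriate scaled constants — yields $\liminf_{\eps\to0}\lambda_2^N(\eps)\ge a|S|(p-q)^2/(\ldots)$, and minimizing the resulting expression over admissible $(p,q)$ recovers exactly $\b$. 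Since the limit is the same along every subsequence, $\lambda_2^N(\eps)\to\b$.

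An alternative, perhaps cleaner, route for the lower bound — and the one I would probably present — is the resolvent-convergence argument used in Lemma \ref{lm1}: rescale $Y\e$ to the fixed cell $Y$ (the eigenvalues transform by $\eps^2$, but unlike Lemma \ref{lm1} here the relevant limit operator is not trivial), and show that the rescaled operators converge in the appropriate sense to a limit operator whose spectrum near $0$ consists of the eigenvalue $0$ (constants) and a next eigenvalue equal to $\b$ times the scaling factor. However, because the $\delta'$ term $a\e\eps\int_S|u_+-u_-|^2$ neither blows up nor vanishes after the correct rescaling (it behaves like $a\eps^{n-1}\cdots$), the limit operator is genuinely a \emph{two-dimensional} operator on the space of pairs of constants $(c_F,c_B)$ with quadratic form $a|S|(c_F-c_B)^2$ on $L_2$-space with weights $(|F|,|B|)$, restricted to mean-zero; its eigenvalues are $0$ and $\b$. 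Making this precise requires the same compactness and convergence lemma (Lemma 1 of \cite{IOS89}) already invoked above.

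The main obstacle is the lower bound, and specifically the bookkeeping needed to show that \emph{no mass escapes into the non-constant part}: one must control $\|\nabla u_N\e\|_{L_2(F\e)}$ and $\|\nabla u_N\e\|_{L_2(B\e)}$ and verify that the oscillatory part of $u_N\e$ contributes negligibly both to the $L_2$ norm (handled by Poincar\'e) and — more delicately — to the jump term across $S\e$, which is where the trace inequalities \eqref{uD_est7}--\eqref{uD_est8} and the a priori bound $\lambda_2^N(\eps)\le C$ must be combined carefully, exactly as in the estimate of $w_D\e$ in Lemma \ref{lm2}. Once that is in hand the minimization over $(p,q)$ is elementary Lagrange-multiplier calculus and reproduces the constant $\b$.
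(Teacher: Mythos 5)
Your argument is correct and is essentially the paper's own: the same piecewise-constant trial function (the paper's $v_N^\eps$ with values $\kappa_F^\eps,\kappa_B^\eps$) gives the upper bound via min–max, and your lower bound uses exactly the paper's ingredients — the a priori bound $\lambda_2^N(\eps)\le C$, Poincar\'e inequalities on $F^\eps$ and $B^\eps$, the rescaled trace estimates, and the normalization/orthogonality constraints that force the means to be asymptotically $\kappa_F^\eps,\kappa_B^\eps$ — the only cosmetic difference being that the paper writes $u_N^\eps=v_N^\eps+w_N^\eps$ and shows $\eta_N^\eps[w_N^\eps,w_N^\eps]\to 0$ rather than extracting subsequential limits of the means and minimizing over $(p,q)$ (which the constraints in fact determine up to sign, so no minimization is needed). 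Minor bookkeeping slips — the means scale like $\eps^{-n/2}$, not $\eps^{-(n-1)/2}$; the orthogonality relation $\langle u_N^\eps\rangle_{F^\eps}|F^\eps|+\langle u_N^\eps\rangle_{B^\eps}|B^\eps|=0$ is exact; and in your alternative resolvent-convergence sketch the jump term in the $\eps^{-2}$-rescaled form carries coefficient $a^\eps/\eps\to a$ while the gradient part blows up, rather than the scaling you describe — do not affect the main argument.
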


\begin{proof}
Let $u_N\e$ be the eigenfunction of $\A\e_N$, which corresponds to $\lambda_2^N(\eps)$ and satisfying the following requirement:
\begin{gather}\label{normalizationN}
\|u_N\e\|_{L_2(Y\e)}=1,\\\label{normalizationN+} u\e_N\text{ is real-valued},\quad \intl_{B\e} u\e_N \,\D x\geq 0.
\end{gather}
Since $\lambda_1^N(\eps)=0$ and the corresponding eigenspace consists of constant functions, we have
\begin{gather}\label{normalizationN++}
\intl_{Y\e}u_N\e \,\D x=0,
\end{gather}
and in view of the min-max principle it holds
\begin{gather}\label{minmaxN}
\lambda^N_2(\eps)=\eta_N\e[u\e_N,u_N\e]=\minl_{u\in \mathcal{H}_N\e}\eta_N\e[u,u],
\end{gather}
where $\mathcal{H}_N\e=\left\{u\in\mathrm{dom}(\eta_N\e):\ \|u\|_{L_2(Y\e)}=1,\quad \intl_{Y\e}u \,\D x=0\right\}$. As in the previous proof, we construct an approximation $v\e_N$ to the eigenfunction $u\e_N$, this time defined by the formula
\begin{gather}\label{vN}
v\e_N(x)=\begin{cases}\kappa_F\e:= -\sqrt{|B\e|\over |F\e|\cdot |Y\e|},&x\in F\e,\\\kappa_B\e:=\sqrt{|F\e|\over |B\e|\cdot |Y\e|},&x\in B\e.\end{cases}
\end{gather}
Obviously $v\e_N\in\mathrm{dom}(\eta_N^\eps)$ and it is easy to see that
\begin{gather}\label{vN_est1}
\eta_N^\eps[v\e_N,v\e_N]={a\e |S\e|\left(\kappa\e_F-\kappa\e_B\right)^2}={a\e |S|\cdot|Y|\over \eps|B|\cdot|F|}\sim \b\;\text{ as }\;\eps\to 0,\\\label{vN_est2}
\|v\e_N\|_{L_2(Y\e)}=1,\\\label{vN_est3}\intl_{Y\e}v_N\e \,\D x=0.
\end{gather}
In view of \eqref{vN_est2} and \eqref{vN_est3}, $\,v_N\e$ belongs to $\mathcal{H}_N\e$, and therefore using \eqref{minmaxN} and taking into account \eqref{vN_est1} we get
\begin{gather}\label{lambdaN_est1}
{\eta_N^\eps[u\e_N,u\e_N]}\leq {\eta_N^\eps[v\e_N,v\e_N]}\leq  C.
\end{gather}
Using \eqref{lambdaN_est1} one can derive the following Poincar\'{e}-type inequalities:
\begin{gather}\label{uN_est1}
\|u\e_N-\langle u\e_N \rangle_{F\e}\|^2_{L_2(F\e)}\leq C\eps^2\|\nabla u\e_N\|^2_{L_2(F\e)}\leq C\eps^2\eta_N^\eps[u\e_N,u\e_N]\leq C_1\eps^2,\\\label{uN_est2}
\|u\e_N-\langle u\e_N \rangle_{B\e}\|^2_{L_2(B\e)}\leq C\eps^2\|\nabla u\e_N\|^2_{L_2(B\e)}\leq C\eps^2\eta_N^\eps[u\e_N,u\e_N]\leq C_1\eps^2.
\end{gather}
Moreover, using \eqref{normalizationN}, \eqref{normalizationN++}, \eqref{uN_est1}, and \eqref{uN_est2} we infer that
\begin{multline*}
\left|\langle u\e_N \rangle_{F\e}\right|^2|F\e|+\left|\langle u\e_N \rangle_{B\e}\right|^2|B\e|\\=1-\|u\e_N-\langle u\e_N \rangle_{F\e}\|^2_{L_2(F\e)}+\|u\e_N-\langle u\e_N \rangle_{B\e}\|^2_{L_2(B\e)}=1+\mathcal{O}(\eps^2),\end{multline*}
$$\langle u\e_N \rangle_{F\e}|F\e|+\langle u\e_N \rangle_{B\e}|B\e|=\intl_{Y\e}u_N\e \,\D x=0,$$
from where, taking into account \eqref{normalizationN+}, we obtain the following asymptotics:
\begin{gather}\label{uN_est3}
\langle u\e_N \rangle_{F\e}= \kappa_F\e(1+\mathcal{O}(\eps^2))^{1/2},\ \langle u\e_N \rangle_{B\e}= \kappa_B\e(1+\mathcal{O}(\eps^2))^{1/2}\;\text{ as }\;\eps\to 0.
\end{gather}
Finally, using \eqref{uN_est1}--\eqref{uN_est3} one can easily arrive at the relation
\begin{gather}\label{uN_est4}
\|u\e_N-\kappa_F\e\|^2_{L_2(F\e)}+\|u\e_N-\kappa_B\e\|^2_{L_2(B\e)}\to 0\;\text{ as }\;\eps\to 0.
\end{gather}
In the next step, we again represent the eigenfuction $u\e_N$ in the form of a sum,
\begin{gather}\label{represN}
u\e_N=v\e_N+w\e_N
\end{gather}
and estimate the remainder $w\e_N$. We plug \eqref{represN} into \eqref{lambdaN_est1} and obtain
\begin{multline}\label{Nestim1}
\eta\e_N[w_N\e,w_N\e]\leq -2\eta\e_N [v_N\e,w_N\e]\\=
-2a\e\intl_{S\e}\left((v\e_N)_+-(v\e_N)_-\right)\left((w\e_N)_+-(w\e_N)_-\right)\,\D s.
\end{multline}
In analogy with relation (\ref{uD_est8}) in the proof of Lemma~\ref{lm2} we employ the following trace inequalities:
\begin{gather*}
\intl_{S\e}\left|(u\e_N)_+-\kappa_F\e\right|^2\D s\leq C\eps^{-1}\left(\|u\e_N-\kappa_F\e\|^2_{L_2(F\e)}+\eps^2\|\nabla u\e_N\|^2_{L_2(F\e)}\right),\\
\intl_{S\e}\left|(u\e_N)_--\kappa_B\e\right|^2\D s\leq C\eps^{-1}\left(\|u\e_N-\kappa_B\e\|^2_{L_2(B\e)}+\eps^2\|\nabla u\e_N\|^2_{L_2(B\e)}\right).
\end{gather*}
Using them we infer from \eqref{Nestim1} that
\begin{multline*}
\left|\eta\e_N[w_N\e,w_N\e]\right|^2\leq 4(a\e)^2\intl_{S\e}\left|(v\e_N)_+-(v\e_N)_-\right|^2\,\D s\cdot \intl_{S\e}\left|(w\e_N)_+-(w\e_N)_-\right|^2\D s\\\leq 8 {(a\e)^2|S\e|(\kappa_F\e-\kappa_B\e)^2}\intl_{S\e}\left(\left|(u\e_N)_+-\kappa_F\e\right|^2+\left|(u\e_N)_--\kappa_B\e\right|^2\right)\,\D s\\ \leq C\left(\|u_N\e-\kappa\e_F\|^2_{L_2(F\e)}+\|u\e_N-\kappa_B\e\|^2_{L_2(B\e)}+\eps^2\|\nabla u\e_N\|^2_{L_2(B\e\cup F\e)}\right),
\end{multline*}
and thus by virtue of \eqref{lambdaN_est1} and \eqref{uN_est4} we get
\begin{gather}\label{wN_est1}
\eta\e_N[w_N\e,w_N\e]\to 0\;\text{ as }\;\eps\to 0.
\end{gather}
Finally, it follows \eqref{vN_est1}, \eqref{represN}, and \eqref{wN_est1} that
\begin{gather}\label{finalN}
\lambda_2^N(\eps)=\eta\e_N[u_N\e,u_N\e]\sim \eta\e_N[v_N\e,v_N\e]\sim\beta\;\text{ as }\;\eps\to 0,
\end{gather}
which is nothing else than the claim of the lemma.
\end{proof}

Finally, one has to inspect the behaviour of the other eigenvalues involved in estimates of the gap edges in the limit $\eps\to 0$.

\begin{lemma}\label{lm4}
One has
\begin{gather}\label{convPhi}
\text{if }\varphi=\pi\text{, then }\;\;\lambda_1^\varphi(\eps)\to \alpha\;\text{ as }\;\eps\to 0,\\\label{convPhi0}
\text{if }\varphi=0\text{, then }\;\;\lambda_2^\varphi(\eps)\to \beta\;\text{ as }\;\eps\to 0.
\end{gather}
\end{lemma}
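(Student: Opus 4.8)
The plan is to squeeze each of the two eigenvalues between an upper bound supplied by a suitable trial function and a lower bound coming from the enclosure \eqref{enclosure} together with Lemmata \ref{lm2} and \ref{lm3}; the only point that does not reduce to one of these two patterns is a lower bound for $\lambda_1^\varphi(\eps)$ in the case $\varphi=\pi$, where the antiperiodicity has to be used.

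Take first $\varphi=0$. The lower bound is immediate: \eqref{enclosure} gives $\lambda_2^\varphi(\eps)\ge\lambda_2^N(\eps)$ and $\lambda_2^N(\eps)\to\b$ by Lemma \ref{lm3}. For the upper bound I would reuse the trial function $v\e_N$ of \eqref{vN}. On the lids $S_\pm\e\subset\partial F\e$ it equals the single constant $\kappa_F\e$, hence satisfies \eqref{theta1} with $\varphi=0$; combined with \eqref{vN_est2}--\eqref{vN_est3} this makes $v\e_N$ admissible in the min-max characterization of $\lambda_2^\varphi(\eps)$ (the first periodic eigenfunction being constant), and \eqref{vN_est1} gives $\lambda_2^\varphi(\eps)\le\eta_0\e[v\e_N,v\e_N]=\eta_N\e[v\e_N,v\e_N]\sim\b$. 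This establishes \eqref{convPhi0}.

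Take now $\varphi=\pi$. The upper bound is again immediate, $\lambda_1^\varphi(\eps)\le\lambda_1^D(\eps)\to\a$ by \eqref{enclosure} and Lemma \ref{lm2} (one could instead insert the function $v\e_D$ of \eqref{vD}, which vanishes near the lids, lies in $\mathrm{dom}(\eta_\pi\e)$, and yields the same bound). For the lower bound I would take a real normalised eigenfunction $u\e$ of $\A_\pi\e$ associated with $\lambda_1^\varphi(\eps)$, with the sign fixed by $\intl_{B\e}u\e\,\D x\ge 0$. Since $\lambda_1^\varphi(\eps)=\eta_\pi\e[u\e,u\e]=\|\nabla u\e\|^2_{L_2(Y\e\setminus S\e)}+a\e\intl_{S\e}|u\e_+-u\e_-|^2\,\D s\le C$, both summands are bounded, and after the rescaling $x\mapsto\eps x$ one has the Poincar\'e estimates $\|u\e-\langle u\e\rangle_{B\e}\|^2_{L_2(B\e)}+\|u\e-\langle u\e\rangle_{F\e}\|^2_{L_2(F\e)}\le C\eps^2$ and the trace estimate $\|u\e-\langle u\e\rangle_{F\e}\|^2_{L_2(S_+\e)}+\|u\e-\langle u\e\rangle_{F\e}\|^2_{L_2(S\e)}\le C\eps$ (and likewise on $S_-\e$), exactly as in \eqref{uD_est3}--\eqref{uD_est8}. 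The decisive observation is that \eqref{theta1} with $\varphi=\pi$ gives, after the shift $x\mapsto x-\eps e_n$, the identity $\intl_{S_+\e}u\e\,\D s=-\intl_{S_-\e}u\e\,\D s$; splitting $\intl_{S_\pm\e}u\e\,\D s=\langle u\e\rangle_{F\e}|S_\pm\e|+\intl_{S_\pm\e}(u\e-\langle u\e\rangle_{F\e})\,\D s$ and using that $|S_\pm\e|$ scales like $\eps^{n-1}$ together with the trace estimate one finds $|\langle u\e\rangle_{F\e}|\le C\eps^{1-n/2}$, whence $|\langle u\e\rangle_{F\e}|^2|F\e|=O(\eps^2)$. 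Combined with $\|u\e\|_{L_2(Y\e)}=1$, the Poincar\'e bounds and the sign condition, this forces $\langle u\e\rangle_{B\e}=|B\e|^{-1/2}(1+O(\eps^2))^{1/2}$, so that $\langle u\e\rangle_{F\e}$ is smaller than $\langle u\e\rangle_{B\e}$ by a factor $O(\eps)$. Writing $u\e_+-u\e_-=(\langle u\e\rangle_{F\e}-\langle u\e\rangle_{B\e})+(u\e_+-\langle u\e\rangle_{F\e})-(u\e_--\langle u\e\rangle_{B\e})$ on $S\e$, using $|\xi+\zeta|^2\ge|\xi|^2-2|\xi|\,|\zeta|$ and the trace bounds, one obtains $\intl_{S\e}|u\e_+-u\e_-|^2\,\D s\ge\eps^{-1}|S|\,|B|^{-1}(1+O(\eps))$; therefore $\lambda_1^\varphi(\eps)\ge a\e\intl_{S\e}|u\e_+-u\e_-|^2\,\D s\ge(a\e/\eps)\,|S|\,|B|^{-1}(1+O(\eps))$, and \eqref{a} yields $\liminf_{\eps\to0}\lambda_1^\varphi(\eps)\ge\a$. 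Combined with the upper bound this proves \eqref{convPhi}.

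The step I expect to be the main obstacle is precisely this lower bound for $\varphi=\pi$: all the rest is either a direct consequence of \eqref{enclosure} and Lemmata \ref{lm2} and \ref{lm3}, or a trial-function computation already carried out. What is genuinely new is the fact that antiperiodicity makes the mean $\langle u\e\rangle_{F\e}$ of the eigenfunction over the trap complement negligible compared with its mean $\langle u\e\rangle_{B\e}$ over the trap, so that the $\delta'$ jump across $S\e$ is asymptotically as large as for $v\e_D$ and the limit is pinned at $\a$ instead of $0$; for $\varphi=0$ this mechanism fails --- the two means are then comparable, in accordance with $\lambda_1^\varphi(\eps)=0$ --- which is exactly why one must examine the second periodic eigenvalue there.
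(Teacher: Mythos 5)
Your proposal is correct, and it reaches the two limits by a partly different route than the paper. For $\varphi=0$ (and for the upper bound at $\varphi=\pi$) you exploit the enclosure \eqref{enclosure} together with Lemmata \ref{lm2} and \ref{lm3} to get the one-sided bounds for free, and you only add the trial-function computation with $v^\varepsilon_N$ from \eqref{vN} (respectively $v^\varepsilon_D$ from \eqref{vD}); the paper instead reruns the whole eigenfunction-approximation scheme of Lemma \ref{lm3} verbatim with $N$ replaced by $\varphi=0$. Your shortcut is legitimate (the admissibility of $v^\varepsilon_N$ in the min-max for the second periodic eigenvalue rests on the first periodic eigenfunction being constant, which you note and which is at the same level of detail as the paper's treatment of the Neumann case), and it is somewhat more economical. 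For the genuinely new step, the lower bound at $\varphi=\pi$, your key mechanism is exactly the paper's: antiperiodicity plus the lid trace and Poincar\'e estimates force $\bigl|\langle u^\varepsilon\rangle_{F^\varepsilon}\bigr|\le C\varepsilon^{1-n/2}$, negligible against $\langle u^\varepsilon\rangle_{B^\varepsilon}\sim |B^\varepsilon|^{-1/2}$ — this is precisely \eqref{ineq1}--\eqref{ineq4}. Where you diverge is in how this is exploited: the paper converts it into the Friedrichs-type inequality \eqref{ineq5} and then repeats the decomposition $u=v^\varepsilon_D+w$ and the remainder estimate of \eqref{finalD}, obtaining $\lambda_1^\pi(\varepsilon)\sim\eta^\varepsilon_\pi[v^\varepsilon_D,v^\varepsilon_D]$, whereas you drop the gradient term and bound the form from below directly by the jump term, $\lambda_1^\pi(\varepsilon)\ge a^\varepsilon\int_{S^\varepsilon}|u_+-u_-|^2\,\mathrm{d}s$, whose asymptotics you read off from the mean values and the scaled trace inequalities \eqref{uD_est7}--\eqref{uD_est8} (the cross term being $O(1)$ against the main term of order $\varepsilon^{-1}$, so that \eqref{a} pins the liminf at $\alpha$). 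Both arguments are sound and rest on the same asymptotic localization of the eigenfunction (essentially constant $|B^\varepsilon|^{-1/2}$ on the trap, asymptotically zero mean on its complement); yours buys a shorter conclusion by using only a one-sided form estimate, while the paper's buys uniformity of presentation by reusing, word for word, the template already verified in Lemmata \ref{lm2} and \ref{lm3}.
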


\begin{proof}
The proof of \eqref{convPhi0} is similar to the argument used to demonstrate \eqref{convN}. Specifically, we approximate the eigenfunction ${u}_{\varphi}^\eps$ of $\A_\varphi^{\eps}$ that corresponds to $\lambda_2^{\varphi}(\eps)$ and satisfies the conditions \eqref{normalizationN}--\eqref{normalizationN++}, with the index $N$ replaced by $\varphi$, by the function ${v}_{N}^\eps$ given by \eqref{vN}. It is clear that ${v}_{N}^\eps\in\mathrm{dom}(\eta\e_\varphi)$ if $\varphi=0$. The check of the asymptotic equality
\begin{gather*}
\lambda^{\varphi}_{2}(\eps)\sim\eta_\varphi\e[{v}_{N}^\eps,{v}_{N}^\eps],\ \varphi=0
\end{gather*}
repeats word-by-word the proof of \eqref{finalN}.

Consider next $\varphi=\pi$. By $u\e_{\varphi}$ we denote the eigenfunction of $\A_\varphi^{\eps}$ that corresponds to $\lambda_1^{\varphi}(\eps)$ and satisfies the conditions \eqref{normalizationD}--\eqref{normalizationD+}, with the index $D$ replaced by $\varphi$. Obviously, for $\varphi=\pi$ such an eigenfuction exists.

Using the Cauchy inequality, a standard trace inequality and the Poincar\'{e} inequality one can employ the following estimate: for any $v\in H^1(F)$ we have
\begin{multline}\label{ineq1}
\left|\langle v\rangle_{S_{\pm}}-\langle v\rangle_F\right|^2\leq
{1\over |S_\pm|}\|v-\langle v\rangle_F\|^2_{L_2(S_{\pm})}\\\leq C\left(\|v-\langle v\rangle_F\|^2_{L_2(F)}+\|\nabla v\|^2_{L_2(F)}\right)\leq
C_1\|\nabla v\|^2_{L_2(F)}.
\end{multline}
 Via a change of variables, $x\mapsto x\eps$, one can easily derive from \eqref{ineq1} the estimate
\begin{gather}\label{ineq1eps}
\left|\langle u\e_{\varphi}\rangle_{S\e_{\pm}}-\langle u\e_{\varphi}\rangle_{F\e}\right|^2\leq
C\eps^{2-n}\|\nabla u\e_{\varphi}\|^2_{L_2(F\e)}.
\end{gather}
Furthermore, from \eqref{ineq1eps} and the fact that $u\e_{\varphi}$ satisfies condition \eqref{theta1} we can infer that
\begin{multline}\label{ineq4}
\left|\langle
u\e_{\varphi}\rangle_{F\e}\right|^2=\left|1-\exp(i\pi)\right|^{-2}\left|\langle
u\e_{\varphi}\rangle_{F\e}-\langle
u\e_{\varphi}\rangle_{S\e_+}+\exp(i\pi)\langle
u\e_{\varphi}\rangle_{S\e_-}-\exp(i\pi)\langle
u\e_{\varphi}\rangle_{F\e}\right|^2\\\leq
2\left|1-\exp(i\pi)\right|^{-2}\left(\left|\langle
u\e_{\varphi}\rangle_{F\e}-\langle
u\e_{\varphi}\rangle_{S_{+}\e}\right|^2+ \left|\langle
u\e_{\varphi}\rangle_{S_{-}\e}-\langle
u\e_{\varphi}\rangle_{F\e}\right|^2\right)\\\leq C\eps^{2-n}\|\nabla
u\e_{\varphi}\|^2_{L_2(F\e)}.
\end{multline}
It follows from  \eqref{ineq4} and the Poincar\'{e} inequality that
\begin{gather}\label{ineq5}
\|u\e_{\varphi}\|^2_{L_2(F\e)}=\|u\e_{\varphi}-\langle
u\e_{\varphi}\rangle_{F\e}\|^2_{L_2(F\e)}+\left|\langle
u\e_{\varphi}\rangle_{F\e}\right|^2\cdot |F\e|\leq C\eps^2\|\nabla
u\e_{\varphi}\|^2_{L_2(F\e)}.
\end{gather}
This means that similarly to the Dirichlet eigenfunction case the function $u\e_{\varphi}$ satisfies the Friedrichs inequality in $F\e$, despite the fact that $u\e_{\varphi}$ does not vanish on $\partial Y$. The remaining part of the argument leading to \eqref{convPhi} repeats literarily the proof of \eqref{convD}: we approximate the eigenfunction $u\e_{\varphi}$ by the function $v\e_{D}$ \eqref{vD}, noting that since $v\e_D$ vanishes in the vicinity of $\partial Y\e$ it belongs to $\mathrm{dom}(\eta_\varphi^\eps)$ for an arbitrary $\varphi$, and then check the asymptotic equality
\begin{gather}\label{asy}
\lambda^{\phi}_{1}(\eps)\sim \eta_\varphi\e[v_D\e,v_D\e],\quad \varphi=\pi.
\end{gather}
The proof of \eqref{asy} repeats word-by-word that of \eqref{finalD} taking into account the inequality \eqref{ineq5}.
\end{proof}

\addtocounter{remark}{1}
\begin{remark}
With some slight modifications of the proof one is able to show that \eqref{convPhi} holds in fact for all values of the parameter $\varphi\not= 0$. For our purposes, however, it is sufficient to consider antiperiodic (i.e. $\varphi=\pi$) eigenvalues only.
\end{remark}

Now, with the preliminaries represented by Lemmata~\ref{lm1}--\ref{lm4} it is not difficult to prove Theorem \ref{th1}. Indeed, it follows from \eqref{repres1} that
\begin{gather}\label{sp}
\sigma(\mathcal{A}\e)=\cupl_{k=1}^\infty [\lambda_k^-(\eps),\lambda_k^+(\eps)],
\end{gather}
where the spectral bands, i.e. the compact intervals $[\lambda_k^-(\eps),\lambda_k^+(\eps)]$, are defined by
\begin{gather}\label{interval}
[\lambda_k^-(\eps),\lambda_k^+(\eps)]= \cupl_{\phi\in [0,2\pi)}
\left\{\lambda_k^{\varphi}(\eps)\right\}.
\end{gather}
We set $\varphi_-=0$, $\varphi_+=\pi$. It follows from \eqref{enclosure} and \eqref{interval} that
\begin{gather}\label{double1}
\lambda_k^{N}(\eps)\leq \lambda_k^-(\eps)\leq
\lambda_k^{\varphi_-}(\eps),\\\label{double2}
\lambda_k^{\varphi_+}(\eps)\leq \lambda_k^+(\eps)\leq
\lambda_k^{D}(\eps).
\end{gather}
Obviously, the left- and right-hand-sides of \eqref{double1} are equal to zero if $k=1$. It follows from \eqref{convN}, \eqref{convPhi0} that in the case $k=2$ they both converge to $\beta$ as $\eps\to 0$, and consequently
\begin{gather}\label{a-}
\lambda_1^-(\eps)=0,\quad \liml_{\eps\to 0}\lambda_2^-(\eps)=\beta.
\end{gather}
Similarly, in view of \eqref{Phi2}, \eqref{convD}, and \eqref{convPhi} we obtain
\begin{gather}\label{a+}
\liml_{\eps\to 0}\lambda_1^+(\eps)=\alpha,\quad  \liml_{\eps\to
0}\lambda_2^+(\eps)=\infty.
\end{gather}
Then the relations \eqref{main1}--\eqref{main2} follow directly from \eqref{sp} and \eqref{a-}--\eqref{a+} in combination with the monotonicity of the sequences $\{\lambda_k^\pm(\eps)\}_{k\in\mathbb{N}}$. In this way, Theorem \ref{th1} is proved.

\section*{Acknowledgements} The research was supported by the German Research Foundation through GRK 1294 ``Analysis, Simulation and Design of Nanotechnological Processes'' and by Czech Science Foundation (GA\v{C}R) within the project 14-06818S.


\end{document}